\providecommand{\bysame}{\makebox[3em]{\hrulefill}\thinspace}
\newcommand{\theoremname}{\relax}
\DeclareMathOperator{\aut}{Aut}
\newcommand{\inv}{^{-1}}
\newcommand{\abs}[2][]{\left|#2\right|_{#1}}
\newcommand{\Abs}[2][]{{\left\|#2\right\|_{#1}}}
\newcommand{\gint}[1]{\lfloor#1\rfloor}
\newcommand{\sym}[1]{#1\cup#1\inv}
\newcommand{\set}[1]{\left\{#1\right\}}
\newcommand{\setst}[2]{\set{\,#1\mid#2\,}}
\newcommand{\spaan}[1]{\left\langle#1\right\rangle}
\newcommand{\aspaan}[1]{\left[#1\right]}
\newcommand{\func}[4][\to]{#2\colon#3#1#4}
\newcommand{\lp}[1]{\Z\left[#1,#1\inv\right]}
\newcommand{\sd}{\rtimes}
\newcommand{\ten}{\otimes}
\newcommand{\isom}{\cong}
\newcommand{\N}{\mathbb{N}}
\newcommand{\W}{\N\cup\set{0}}
\newcommand{\Z}{\mathbb{Z}}
\newcommand{\R}{\mathbb{R}}
\newcommand{\C}{\mathbb{C}}
\newtheorem*{nthm}{\theoremname}
\newcommand{\IM}{I_1}
\newcommand{\IMa}{I_1'}
\renewcommand{\Im}{I_2}
\newcommand{\Ima}{I_2'}
\newtheorem{prop}{Proposition}
\newtheorem{thm}[prop]{Theorem}
\newtheorem{cor}[prop]{Corollary}
\newtheorem{lem}[prop]{Lemma}
\theoremstyle{definition}
\newtheorem{defin}{Definition}
\theoremstyle{remark}
\newtheorem*{notat}{Notation}
\newtheorem*{rk}{Remark}
\begin{document}
\title{Pairs of valuations and the geometry of soluble groups}
\author{Andrew~D. Warshall\thanks{We thank our advisor, Andrew Casson,
    for his helpful comments and Gilbert Baumslag and Tullia Dymarz
    for suggesting the problem that motivated this
    work.}\\\texttt{andrew.warshall@yale.edu}}
\maketitle

\begin{abstract}
We introduce the concepts of a pair of valuations and a good
generating set and show how they can be used to prove geometric
properties of soluble groups.
\end{abstract}

\section{Introduction}
Let $(A,a_0)$ be a pointed path-metric space. Let $\Abs{a}=d(a,a_0)$
and $B(n)$ (respectively $\overline{B}(n)$) be the open
(resp.\ closed) ball of radius $n$ about $a_0$. We say $A$ is
\emph{$n$-almost convex} if there are pairs of points $(a_i,b_i)\in
A^2$ such that $d(a_i,b_i)\le n$ but
$\lim_{i\to\infty}d_{\overline{B}(\max(\Abs{a_i},\Abs{b_i}))}(a_i,b_i)$,
where by $d_{\overline{B}(\max(\Abs{a_i},\Abs{b_i}))}$ we mean the
path metric induced from paths restricted to
$\overline{B}(\max(\Abs{a_i},\Abs{b_i}))$.  The \emph{depth} of a
point $a\in A$ is the distance from $a$ to the complement of
$B(\Abs{a})$.  We say $(A,a_0)$ has \emph{deep pockets} if it has
points of arbitrarily large depth. (This concept is clearly of
interest primarily for unbounded metric spaces.) Neither of these
concepts is a quasi-isometry invariant, but both are coarse
invariants.

If $G$ is a group and $S$ a generating set for $G$, the word metric
(denoted by $\Abs[S]{\cdot}$) gives $G$ the structure of a pointed
metric space. It is, admittedly, not a path-metric space, being
discrete, but its Cayley graph is. In practice, we ignore the edges of
the Cayley graph and simply pretend that sequences of group elements
at distance $1$ apart are paths. When this is done, it was shown by
Cannon in \cite{C} that the concept of $n$--almost convexity does not
depend on $n$, provided it is $\ge 2$. Therefore, we may simply refer
to a group's being \emph{almost convex} with respect to some
generating set.

Many groups are already known either not to be almost convex or to
have deep pockets. Thus, lamplighter groups are not almost convex with
respect to any generating set and have deep pockets with respect to
their standard generating set (see \cite{CT}). A similar but more
complicated and delicate argument (in \cite{W2}) shows the same thing
for $K\sd\spaan{t}$, where $K$ is a finite-rank abelian group and $t$
acts on $K$ by a hyperbolic automorphism which is an endomorphism of
some lattice.  Examples include lattices in Sol and the soluble
Baumslag-Solitar groups. (Non--almost convexity was already shown for
these examples in \cite{CFGT} and \cite{MS} respectively; deep pockets
were also shown for lattices in Sol in \cite{W1}.)

In this paper, we give a simpler and more general context in which to
view these results. In particular, we will be able to prove that
$\Z[1/6]\sd_{\cdot3/2}\spaan{t}$ is not almost convex with respect to
any generating set and has deep pockets with respect to some
generating set.

\section{Not almost convexity}
\begin{defin}
Let $l\in\N$, $K$ be a $\Z[t_1,t_1\inv,\dots,t_l,t_l\inv]$-module and
let $\IM$ and $\func{\Im}{K-\set{0}}{\R}$.  Then $(\IM,\Im)$ is a
\emph{pair of valuations} for $K$ if there are $C$, $b_1$, \dots,
$b_l\in\R$ such that, for all $k_1$, $k_2\in K-\set{0}$,
\begin{itemize}
\item $\IM(t_ik_1)=\IM(k_1)+b_i$,
\item $\Im(t_ik_1)=\Im(k_1)-b_i$,
\item $\IM({-k_1})=\IM(k_1)$ and similarly for $\Im$ and
\item $\IM(k_1+k_2)\le\max(\IM(k_1),\IM(k_2))+C$ and similarly for
  $\Im$.
\end{itemize}
\end{defin}

\begin{rk}
If $K$ has a strongly $t$-logarithmic $t$-generating set, then
$(I_{max},-I_{min})$ are a pair of valuations.
\end{rk}

\begin{rk}
If $K=\Z[1/6]$ and $t$ acts by multiplication by $3/2$, then the
$2$-adic and $3$-adic norms constitute a pair of valuations.
\end{rk}

\begin{notat}
By $\aspaan{t_1,\dots,t_l}$ we mean the free abelian group on $t_1$,
\dots, $t_l$.
\end{notat}

For $K$ any $\Z[t_1,t_1\inv,\dots,t_l,t_l\inv]$-module, we denote
elements of
\[
K\sd\aspaan{t_1,\dots,t_l}
\]
by ordered pairs $(m,k)$ with $m\in\Z^l$ and $k\in K$. Multiplication
is given by $(m_1,k_1)(m_2,k_2)=(m_1+m_2,t^{m_2}k_1+k_2)$, where by
$t^{m_2}$ we mean $t_1^{m_{2,1}}\dots t_l^{m_{2,l}}$ and by $m_{2,i}$
we mean the $i$th component of $m_2$.

\begin{lem}\label{triangle}
Let $K$ be a $\Z[t_1,t_1\inv,\dots,t_l,t_l\inv]$-module and
$(\IM,\Im)$ be a pair of valuations for $K$ with $C$, $b_1$, \dots,
$b_l$ as in the definition of a pair of valuations. Let $A$ be a
finite subset of $K$ and let $z\in\N$. For $a=t_1^{m_1}\dots
t_l^{m_l}\in\aspaan{t_1,\dots,t_l}$, let $B(a)=b_1m_1+\dots+b_lm_l$.
Then there is $D\in\N$ with the following property.  For $1\le i\le
n$, let $a_i\in\aspaan{t_1,\dots,t_l}$ be such that $a_i>0$ and
\[
\abs{B(a_1)},\abs{B(a_{i+1}-a_i)},\abs{B(a_n}\le z.
\]
Let $k=\sum_{i=1}^nt^{a_i}k_i$ for $k_i\in A$.  Then there is $p\in\W$
such that more than $2p$ of the $B(a_i)$ are
$\ge\max(\IM(k)-D-p,1)$.

Similarly, if we instead assume all $a_i<0$, there is $p\in\W$ such
that more than $2p$ of the $-B(a_i)$ are $\ge\max(\Im(k)-D-p,1)$.
\end{lem}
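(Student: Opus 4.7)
The plan is to bound $\IM(k)$ from above using the ultrametric-like inequality of the definition, and then extract $p$ by comparing this bound with the sorted list of $B(a_i)$'s. Let $M_1=\max_{a\in A}\IM(a)$ and arrange the values $B(a_i)$ in decreasing order as $\beta_1\ge\beta_2\ge\cdots\ge\beta_n$; each summand of $k=\sum t^{a_i}k_i$ satisfies $\IM(t^{a_i}k_i)\le M_1+B(a_i)$ since $k_i\in A$.

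The technical engine is the refined sum estimate
\[
\IM(k)\le M_1+\max_{r\ge 1}\bigl(\beta_r+C\log_2 r\bigr)+D_0
\]
for a constant $D_0$, obtained by partitioning the summands into dyadic rank-groups of sizes $1,2,4,\ldots$ (group $j$ containing those of $B$-rank in $[2^{j-1},2^j)$), using $\IM(k_1+k_2)\le\max(\IM(k_1),\IM(k_2))+C$ within each group to bound its contribution by $M_1+\beta_{2^{j-1}}+C(j-1)$, and combining the group sums by a balanced binary tree (at an extra cost of $O(C\log_2\log_2 n)$, absorbed into $D_0$ by the case analysis below).

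Let $r^*$ attain the maximum on the right. If $r^*$ is at least a threshold $Q_0$ (depending only on $C$) for which $r/2-1\ge C\log_2 r$ whenever $r\ge Q_0$, I take $p=\lfloor(r^*-1)/2\rfloor\in\W$. Then $2p+1\le r^*$, so by monotonicity $\beta_{2p+1}\ge\beta_{r^*}$, and $p\ge C\log_2 r^*-O(1)$, hence $\beta_{2p+1}+p\ge\IM(k)-D$ for $D:=M_1+D_0+O(1)$. If instead $r^*<Q_0$, then $\beta_1\ge\beta_{r^*}\ge\IM(k)-M_1-D_0-C\log_2 Q_0$, so $p=0$ works after enlarging $D$. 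In either subcase the $2p+1$ indices of largest $B(a_i)$ meet the upper bound $\IM(k)-D-p$; the floor of $1$ in $\max(\IM(k)-D-p,1)$ is handled by the hypothesis $a_i>0$, which forces every $B(a_i)\ge 1$ after normalising $B$ to be integer-valued.

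The main obstacle is absorbing the $O(\log_2\log_2 n)$ combination error into a constant $D$. I would handle this via the walk hypothesis: when $n$ is so large that $\log\log n$ grows beyond a given threshold, all $n$ indices satisfy $B(a_i)\ge 1$ by positivity, so taking $p$ just below $n/2$ collapses the threshold $\max(\IM(k)-D-p,1)$ to $1$ and the required count becomes automatic. The second statement ($a_i<0$) follows symmetrically with $\Im$ in place of $\IM$ and $-B(a_i)$ in place of $B(a_i)$, since the defining formulas of a pair of valuations reverse the sign of $b_i$ between $\IM$ and $\Im$.
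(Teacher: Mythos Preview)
Your overall strategy---bound $\IM(k)$ from above via the sorted values $\beta_1\ge\beta_2\ge\cdots$ and then extract $p$---is sound and is essentially what the paper does. The gap is in your choice of \emph{dyadic} groups. Combining the $O(\log_2 n)$ group sums really does cost an extra $O(C\log_2\log_2 n)$, and your proposed workaround does not remove it: taking $p$ just below $n/2$ collapses the threshold $\max(\IM(k)-D-p,1)$ to $1$ only when $\IM(k)\le D+p+1\approx D+n/2$. But the walk hypothesis $\abs{B(a_{i+1}-a_i)}\le z$ permits $\beta_1$ (and hence $\IM(k)$) to be of order $nz$, and since $z\in\N$ is arbitrary this can be much larger than $n/2$. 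So for such $k$ the threshold does not collapse, and the $\log_2\log_2 n$ term remains unabsorbed in $D$. (A side issue: $b_1,\dots,b_l\in\R$, so you cannot in general normalise $B$ to be integer-valued, and ``$a_i>0$'' does not force $B(a_i)\ge 1$.)

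The paper avoids the residual term by replacing your dyadic groups with blocks of \emph{constant} size $2C$: sort the indices by $B(a_i)$ and let $V_j$ contain the $j$th block of $2C$ indices, $v_j=\sum_{i\in V_j}t^{a_i}k_i$. Arguing by contrapositive---assume that for every $p$ at most $2p$ of the $B(a_i)$ are $\ge M-D-p$---one gets $\IM(v_j)<M-Cj$, so the block valuations drop by at least $C$ from each block to the next. Then the telescope
\[
\IM\Bigl(\sum_{j\ge i}v_j\Bigr)\le\max\Bigl(\IM(v_i),\,\IM\Bigl(\sum_{j\ge i+1}v_j\Bigr)\Bigr)+C
\]
closes up exactly, with no growth in $n$, yielding $\IM(k)<M$ and hence the lemma with $D$ a genuine constant. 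The block size $2C$ is chosen precisely so that the $+C$ incurred at each combination is cancelled by the drop between consecutive blocks; the dyadic grouping cannot achieve this balance, which is why your $D_0$ ends up depending on $n$.
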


\begin{proof}
We prove the first paragraph; the proof of the second paragraph is
analogous.

We choose $v_1$, $v_2$, \dots $\in K$ inductively as follows. Let
$V_1$ be a subset of $\setst{i\in\N}{i\le n,a_i>0}$ formed by choosing
$2C$ integers $i$ with $B(a_i)$ greatest (where $C$ is as in the
definition of a pair of valuations) and let $v_1=\sum_{i\in
  V_1}t^{a_i}k_i$.  Then let $V_2$ similarly contain the $2C$ integers
with $B(a_i)$ next greatest and $v_2=\sum_{i\in V_2}t^{a_i}k_i$, and
so on.  This process must terminate since $n$ is finite; suppose $V_q$
is the last nonempty subset. (It is possible that $\abs{V_q}<2C$.)
Then $k=\sum_{i=1}^qv_i$.

Let $M\in\Z$ be such that, for all $p\in\W$, at most $2p$ of the $a_i$
are
\[
\ge M-I-C-\log_2C-2-p,
\]
where
\[
I=\max\setst{\IM(k)}{k\in A}.
\]
Then every $v_i$, by construction, is the sum of $2C$ terms $k'$ with
\begin{multline*}
\IM(k')\le I+\max\setst{B(a_j)}{j\in V_i}\\\le
I+M-I-C-\log_2C-2-C(i-1)=M-\log_2C-2-Ci,
\end{multline*}
so $\IM(v_i)<M-Ci$.

Under these conditions, I claim that, for all $i\in\set{1,\dots,q}$,
$\IM\left(\sum_{j=i}^qv_j\right)<M-C(i-1)$.  The proof is by
induction on $q-i$. For $i=q$ this is weaker than what we already
know.  For $i<q$ it follows easily from writing
\[
\IM\left(\sum_{j=i}^qv_j\right)\le\max\left(\IM(v_i),\IM\left(\sum_{j=i+1}^qv_j\right)\right)+C<M-Ci+C=M-C(i-1),
\]
where the strict inequality is by induction. The claim is proven.

It follows, setting $i=1$, that $\IM(k)<M$.  This proves the lemma,
letting $D=I+C+\log_2C+2$.
\end{proof}

\begin{prop}\label{fourthp}
Let $K$ be a $\Z[t_1,t_1\inv,\dots,t_l,t_l\inv]$-module and
$(\IM,\Im)$ be a pair of valuations for $K$ with $b_1,$ \dots, $b_l$
as in the definition of a pair of valuations. For $a=t_1^{m_1}\dots
t_l^{m_l}\in\aspaan{t_1,\dots,t_l}$, let $B(a)=b_1m_1+\dots+b_lm_l$.
Let $G=K\sd\aspaan{t_1,\dots,t_l}$.  Let $S$ be any finite generating
set for $G$.  Let $z=\max\setst{B(m)}{(m,k)\in\sym{S}}$.  Then there
is $F'\in\N$ such that, for every $g=(0,k)\in K\subset G$, either
$\IM(g)$ or $\Im(g)\le\Abs[S]{g}z/4+F'$.  (These make sense since
$g\in K$.)
\end{prop}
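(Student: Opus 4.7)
My plan reduces to a trajectory estimate combined with Lemma~\ref{triangle}. Write $g$ as a shortest word $g=s_1\cdots s_n$ with $n=\Abs[S]{g}$ and $s_i=(\mu_i,\kappa_i)\in\sym S$; induction on the semidirect-product multiplication gives $g=\bigl(\sum\mu_i,\,\sum_{i=1}^n t^{a_i}\kappa_i\bigr)$ with $a_i=\mu_{i+1}+\cdots+\mu_n$. Since $g\in K$, $\sum\mu_i=0$, so $k=\sum_{i=1}^n t^{a_i}\kappa_i$ and $a_0:=\mu_1+\cdots+\mu_n=0$. The real sequence $T_i:=B(a_i)$ then satisfies $T_0=T_n=0$ and $|T_{i+1}-T_i|=|B(\mu_{i+1})|\le z$. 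Setting $M=\max T_i\ge0$ and $m'=\min T_i\le0$, the total variation of $(T_i)$ is at least $2(M+|m'|)$ (the sequence must ascend from $0$ to $M$, descend to $m'$, and return to $0$); since there are $n$ steps each of size $\le z$, we get $M+|m'|\le nz/2$, whence $\min(M,|m'|)\le nz/4$. Assume without loss of generality that $M\le nz/4$; the other case is handled symmetrically, using the second paragraph of Lemma~\ref{triangle}.

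Split $k=k_++k_{\le0}$ with $k_+=\sum_{i\in P}t^{a_i}\kappa_i$, where $P=\setst{i}{T_i>0}$. Reindex the summands of $k_+$ in their original order of appearance. For consecutive $i_k<i_{k+1}\in P$: either $i_{k+1}=i_k+1$ (so $|T_{i_{k+1}}-T_{i_k}|\le z$), or the pair straddles an index with $T\le 0$, in which case both $T_{i_k},T_{i_{k+1}}\in(0,z]$, each being one step from a non-positive value; the first and last indices of $P$ are similarly adjacent to indices with $T\le 0$, so their $T$-values lie in $(0,z]$. Thus the hypotheses of Lemma~\ref{triangle}'s first paragraph hold for $k_+$. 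The lemma, coupled with the trajectory-derived count bound $\#\setst{i\in P}{T_i\ge h}\le|P|-2(h-z)/z$ (valid for $h>z$) and with $|P|\le n-2|m'|/z$, yields $\IM(k_+)\le nz/4+F_1$ for a constant $F_1$. For $k_{\le 0}$, each summand satisfies $\IM(t^{a_i}\kappa_i)=\IM(\kappa_i)+T_i\le I_{\max}:=\max\setst{\IM(\kappa)}{(m,\kappa)\in\sym S}$, so iteration of the non-archimedean inequality yields $\IM(k_{\le 0})\le I_{\max}+C\lceil\log_2 n\rceil$.

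Combining $\IM(k)\le\max(\IM(k_+),\IM(k_{\le0}))+C$ and using that (for $z>0$) the function $n\mapsto C\log_2 n-nz/4$ is bounded above on $\N$, the right-hand side is $\le nz/4+F'$ for a suitable constant $F'$, yielding $\IM(g)\le\Abs[S]{g}z/4+F'$. The main obstacle is the passage from Lemma~\ref{triangle}'s existential conclusion ($\exists p\in\W$ with more than $2p$ indices at $B$-height at least $\IM(k_+)-D-p$) to the explicit bound $\IM(k_+)\le nz/4+F_1$. This requires combining the lemma with the height-count bound and the relation $|m'|\ge nz/4-M$ (which is automatic when $M+|m'|$ is close to $nz/2$), optimizing over $h$; the subtle regime is when $|m'|<nz/4$, where the lemma's bound alone is weaker than needed and must be supplemented by a direct non-archimedean estimate on $k_+$ to close the gap.
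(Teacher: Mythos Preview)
Your overall structure is right and matches the paper (express $k$ along a geodesic word, split $k=k_++k_{\le0}$, apply Lemma~\ref{triangle} to $k_+$, bound $k_{\le0}$ by the iterated non-archimedean inequality, and absorb the $\log n$ term into $nz/4$ at the end). The gap is in your choice of dichotomy. You split on the \emph{extreme values} of the walk, reducing to $M=\max_i T_i\le nz/4$; but what Lemma~\ref{triangle} together with your height-count estimate actually yields is $\IM(k_+)\le|P|z/2+O(1)$, and this is $\le nz/4+O(1)$ only if $|P|\le n/2$. The hypothesis $M\le nz/4$ does not bound $|P|$: with $z=1$ and the walk $0,1,2,\dots,\lfloor n/4\rfloor,\lfloor n/4\rfloor-1,\lfloor n/4\rfloor,\lfloor n/4\rfloor-1,\dots,1,0$ one has $M\le nz/4$, $m'=0$, yet $|P|=n-1$. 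Your auxiliary bound $|P|\le n-2|m'|/z$ is then vacuous. And your stated fallback (``direct non-archimedean estimate on $k_+$'') gives only $\IM(k_+)\le M+I_{\max}+C\lceil\log_2 n\rceil\le nz/4+O(\log n)$; the $\log n$ sits \emph{on top of} the $nz/4$ and cannot be absorbed into a fixed constant $F_1$, so the final bound becomes $\IM(k)\le nz/4+O(\log n)$ rather than $nz/4+F'$.

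The fix, which is exactly what the paper does, is to split on the \emph{sign count}: either at most $n/2$ of the $T_i$ are positive, or at most $n/2$ are non-positive. In the first case $|P|\le n/2$, and then the count ``at most $n/2-2j$ of the $T_i$ exceed $jz$'' (your height-count bound specialised to $|P|\le n/2$) combines with Lemma~\ref{triangle} to give $\IM(k_+)\le nz/4+D+z$ with no logarithmic loss; the $C\log_2 n$ appears only in $\IM(k_{\le0})$, which carries no $nz/4$ term and is therefore dominated in the final $\max$. Your verification of Lemma~\ref{triangle}'s step-size hypotheses on the reindexed $P$-sequence, and your endgame combining the two pieces, are both correct and agree with the paper.
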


\begin{cor}\label{fourth}
Let $K$ be a $\Z[t_1,t_1\inv,\dots,t_l,t_l\inv]$-module and
$(\IM,\Im)$ be a pair of valuations for $K$ with $b_1,$ \dots, $b_l$
as in the definition of a pair of valuations. For $a=t_1^{m_1}\dots
t_l^{m_l}\in\aspaan{t_1,\dots,t_l}$, let $B(a)=b_1m_1+\dots+b_lm_l$.
Let $G=K\sd\aspaan{t_1,\dots,t_l}$.  Let $S$ be any finite generating
set for $G$.  Let $z=\max\setst{B(m)}{(m,k)\in\sym{S}}$.  Then there
is $F\in\N$ such that, for every $g=(a,k)\in G$ with $\abs{B(a)}\le
z$, either $\IM(k)$ or $\Im(k)\le\Abs[S]{g}z/4+F$.
\end{cor}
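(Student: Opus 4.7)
The plan is to adapt the walk-based argument underlying Proposition~\ref{fourthp}; the only new feature in the corollary is that the walk traced by a geodesic word for $g$ ends at $a\in\Z^l$ with $|B(a)|\le z$ rather than at $0$, which is a harmless relaxation because the argument only uses the endpoint bound $|B(a_n)|\le z$.

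Concretely, I would write $g=s_1\cdots s_n$ as a geodesic word with $s_j=(m_j,k_j'')$ and $n=\Abs[S]{g}$, set $a_j=m_1+\cdots+m_j$, and expand the product to get $k=\sum_{j=1}^n t^{a_n-a_j}k_j''$. Reversing via $\beta_j=a_n-a_j$ (and augmenting with a dummy $\beta_0=a_n$, $k_0''=0$) yields a walk from $\beta_0$ to $\beta_n=0$ satisfying $|B(\beta_0)|\le z$ and $|B(\beta_{j+1}-\beta_j)|\le z$, along with the representation $k=\sum_{j=0}^n t^{\beta_j}k_j''$ with the $k_j''$ drawn from a finite set.

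The core step is then to partition indices by the sign of $B(\beta_j)$ into $\pi_+$ and $\pi_-$, split $k=k_++k_-$, and apply Lemma~\ref{triangle} to each (first paragraph to $k_+$, second to $k_-$). The restricted walks still satisfy the lemma's hypotheses: the first and last $\beta_j$ in each of $\pi_\pm$ have $|B(\beta_j)|\le z$ (either they are global endpoints, or they are adjacent to an index of opposite sign, forcing $|B|\le z$ by a single step), and consecutive differences in the restricted walk are $\le z$ in $B$ (either unchanged from the original walk within a contiguous block, or a jump between two values both lying in $(0,z]$ across a $\pi_-$ gap, and symmetrically for $\pi_-$). The lemma then yields $\IM(k_+)\le n_+z/2+O(1)$ and $\Im(k_-)\le n_-z/2+O(1)$, where $n_\pm=|\pi_\pm|$; since $n_++n_-\le n+1$, at least one of these is $\le\Abs[S]{g}z/4+O(1)$.

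To finish, I must control the ``cross'' valuations. Every summand of $k_-$ has $\IM\le I:=\max_{k'\in A}\IM(k')$ (since $B(\beta_j)\le 0$), so grouping $2C$ summands at a time as in the proof of Lemma~\ref{triangle} gives $\IM(k_-)=O(\log n)$, which is absorbed into $F$ (trivially for small $n$, and because $\log n\le \Abs[S]{g}z/4$ once $n$ is large); analogously for $\Im(k_+)$. Combining via the triangle inequalities for $\IM$ and $\Im$, at least one of $\IM(k)$ or $\Im(k)$ is at most $\Abs[S]{g}z/4+F$, as required. The main obstacle I foresee is verifying that Lemma~\ref{triangle} genuinely applies to the restricted walks (via the boundary-value observation above) together with the careful constant-tracking needed to land at exactly the factor $z/4$.
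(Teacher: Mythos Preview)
Your approach is correct and essentially identical to the paper's: the paper gives only the proof of Proposition~\ref{fourthp} (the case $B(a)=0$) and leaves Corollary~\ref{fourth} as an immediate consequence, the point being exactly your observation that the argument uses only $\abs{B(a_0)}\le z$, not $B(a_0)=0$. Your symmetric presentation (apply Lemma~\ref{triangle} to both $k_+$ and $k_-$ and then use $\min(n_+,n_-)\le n/2$) is just a re-ordering of the paper's ``WLOG at most $n/2$ are positive'' step.

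Two small remarks. First, your concern about whether Lemma~\ref{triangle} applies to the restricted walks is unnecessary: if you inspect its proof, the step/endpoint hypotheses $\abs{B(a_1)},\abs{B(a_{i+1}-a_i)},\abs{B(a_n)}\le z$ are never used there---the lemma only needs $B(a_i)>0$ and the valuation axioms. Where those walk constraints \emph{are} needed is in the separate counting step (``at most $n_+-2j$ of the positive $B(\beta_j)$ exceed $jz$'') that you must combine with the lemma's conclusion to reach $\IM(k_+)\le n_+z/2+O(1)$; you verified the constraints correctly but elided this combination when you wrote ``the lemma then yields \dots''. Second, your handling of the cross term $\IM(k_-)=O(\log n)$ matches the paper's line $\IM(v^-)\le I+C(\log_2 n+1)$ exactly; the absorption into $F$ works because one takes $F$ large enough that $C(\log_2 n+1)+I\le nz/4+F$ holds for all $n$ for which the main term does not already dominate.
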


Note that the result is easy if the coefficient of $\Abs[S]{g}$ is
changed from $z/4$ to $z/2$ or, for that matter, to $Az$ for any
$A>1/4$.  For the sequel, however, we will need $z/4$.

\begin{proof}[Proof of Proposition~\ref{fourthp}]
Let $g=(0,k)=(m_1,k_1)(m_2,k_2)\dots(m_n,k_n)$, $(m_1,k_1)$, \dots,
$(m_n,k_n)\in\sym{S}$.  We will show either $\IM(g)$ or $\Im(g)\le
nz/4+F'$.  Let $a_i=\sum_{j=i+1}^nm_j$, so that
$k=\sum_{i=1}^nt^{a_i}k_i$; note that all $B(a_i)\in\R$ and
$a_0=a_n=0$ (since $\sum_{i=1}^nm_i=0$).  Either at most $n/2$ of the
$B(a_i)$ are positive or at most $n/2$ are negative.  Assume without
loss of generality that at most $n/2$ are positive. But
\[
\abs{B(a_{i+1}-a_i)}=\abs{B(m_{i+1})}\le z.
\]
It follows that, for each $j\in\W$, at most $n/2-2j$ of the $B(a_i)$
are $>jz$. Let $p=\gint{n/4-j+1}$, where $j$ is as in the preceding
sentence.  Then, for each $p\in\Z$, at most $2p$ of the $B(a_i)$ are
$>\max(nz/4+z-pz,0)$.

Decompose $k=v^++v^-$, where
\[
v^+=\sum\setst{t^{a_i}k_i}{a_i>0}
\]
and
\[
v^-=\sum\setst{t^{a_i}k_i}{a_i\le0}.
\]
Let
\[
I=\max\setst{\IM(s')}{s'\in\sym{S}}.
\]
Then $\IM(v^-)\le I+C(\log_2n+1)$, where $C$ is as in the definition
of a pair of valuations.

By Lemma~\ref{triangle}, there are $D\in\N$ and $p\in\W$, with $D$
independent of $g$ and $n$, such that more than $2p$ of the $B(a_i)$
are $\ge\max(\IM(v^+)-D-p,1)\ge\max(\IM(v^+)-D-pz,1)$.  It will
follow, by the first paragraph, that
\[
\IM(v^+)-D-pz<\frac{nz}{4}+z-pz,
\]
that is that $\IM(v^+)<nz/4+D+z$.

But, by the definition of a pair of valuations,
\begin{multline*}
\IM(g)=\IM(k)=\IM(v^++v^-)\le\max(\IM(v^+),\IM(v^-))+C\\\le\max\left(\frac{nz}{4}+D+z,I+C(\log_2n+1)\right)+C<\frac{nz}{4}+F',
\end{multline*}
where $C$ is again as in the definition of a pair of valuations and
$F'$ is a constant depending only, via $z$, $I$, $D$ and $C$, on $K$
and $S$ (not on $n$ or $g$), so we are done.
\end{proof}

\begin{thm}
Let $K$ be a $\Z[t_1,t_1\inv,\dots,t_l,t_l\inv]$-module which has a
pair of valuations.  Let $G=K\sd\aspaan{t_1,\dots,t_l}$ and let $S$ be
a finite generating set for $G$.  Then $G$ is not almost convex with
respect to $S$.
\end{thm}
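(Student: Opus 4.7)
By the Cannon result cited in the introduction, it suffices to refute $2$-almost convexity: the plan is to construct pairs $(g_n,h_n)\in G^2$ with $d_S(g_n,h_n)\le 2$, $\Abs[S]{g_n},\Abs[S]{h_n}\le n$, yet $d_{\overline{B}(n)}(g_n,h_n)\to\infty$. The only tool is Corollary~\ref{fourth}, which tells us that every $u=(a,k)\in\overline{B}(n)$ with $\abs{B(a)}\le z$ satisfies $\min(\IM(k),\Im(k))\le nz/4+F$.

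I would take $g_n=(0,k_n)$ with $k_n\in K$ chosen so that both $\IM(k_n)$ and $\Im(k_n)$ sit within $O(1)$ of $nz/4$, simultaneously saturating both horns of the ``either\,\dots\,or'' of Corollary~\ref{fourth}. Such $k_n$ exists once a matching norm upper bound complementing Proposition~\ref{fourthp} is produced by an explicit factorization realising $\IM$ via a positive-$B$ excursion and $\Im$ via a negative-$B$ one, each of total $B$-length $\sim nz/4$, so $\sim n$ letters combined. Let $h_n=g_n\cdot s$ for a single generator $s\in\sym{S}$ chosen so that $d_S(g_n,h_n)=1$ and $h_n$ still lies in $\overline{B}(n)$.

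For the in-ball lower bound, consider any path $g_n=u_0,u_1,\ldots,u_m=h_n$ in $\overline{B}(n)$ with $u_j=(a_j,k^{(j)})$ and $d_S(u_j,u_{j+1})=1$; then $\abs{B(a_{j+1})-B(a_j)}\le z$ and each step shifts $\IM(k^{(j)}),\Im(k^{(j)})$ by $O(z)$. At each $u_j$ in the ``low-$B$ slab'' $\abs{B(a_j)}\le z$, Corollary~\ref{fourth} pins $\min(\IM(k^{(j)}),\Im(k^{(j)}))\le nz/4+F$, while the endpoints have $\min$ exactly at this threshold. The goal is to show that moving between $g_n$ and $h_n$ while respecting this slab constraint, together with the $O(z)$ step-size, requires either a long sojourn inside the slab or a high-$B$ excursion whose duration is capped by the ball radius; either way $m$ grows with $n$.

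The principal obstacle is making this last step rigorous. One must choose $s$ and the shape of $k_n$ carefully enough that no short low-$B$ rearrangement of the $K$-coordinate suffices; this is where the sharp factor $z/4$ from Corollary~\ref{fourth}, as opposed to $z/2$ --- noted in the remark after the corollary to be essential for the sequel --- enters decisively, since the tightness of $z/4$ eliminates the room a looser bound would permit for a short in-ball word. I expect this step to require a refinement of Lemma~\ref{triangle} that is sensitive to the boundary data at both endpoints of the path.
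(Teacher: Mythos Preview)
Your plan has a structural flaw at the outset: if $g_n$ and $h_n=g_ns$ both lie in $\overline{B}(n)$ and $d_S(g_n,h_n)=1$, then the two-vertex sequence $g_n,h_n$ is already an in-ball path, so $d_{\overline{B}(n)}(g_n,h_n)=1$ and there is nothing to prove. (Recall the paper works with vertex sequences, not geometric edges.) No refinement of Lemma~\ref{triangle} can rescue this; the endpoints must be arranged so that the obvious short connecting word is \emph{forced to leave the ball}.

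The paper accomplishes this by separating the endpoints in the $t$-direction rather than keeping them in the slab. Fix $s=(z',k_s)\in\sym S$ with $B(z')=z$ maximal, any $a\in K\setminus\set0$, and set
\[
h_n^{\pm}=s^{\,n\pm J}\,a\,s^{-2n}\,a\,s^{\,n},
\]
so that $h_n^+(h_n^-)^{-1}=s^{2J}$ (bounded distance $2J$, not $1$) and $\Abs[S]{h_n^\pm}\le 4n-J+2\Abs[S]{a}$. The $t$-coordinates of $h_n^{\pm}$ have $B$-values $\pm Jz$, on \emph{opposite sides} of the slab $\abs{B(m)}<z$; hence any path between them in the Cayley graph must pass through some $g=(m,k)$ with $\abs{B(m)}<z$. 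If $g$ lies in the ball of radius $4n-J+2\Abs[S]{a}$, Corollary~\ref{fourth} gives $\min(\IM(k),\Im(k))\le nz-Jz/4+O(1)$, whereas both valuations of $h_n^{\pm}$ are $nz+O(1)$. Choosing the constant $J$ large enough makes this gap exceed $C$, and then the inequality $\IM(h_n^+)\le\max(\IM(h_n^+g^{-1})+B(m),\IM(g))+C$ forces $\IM(h_n^+g^{-1})\ge nz-O(1)$, hence $\Abs[S]{h_n^+g^{-1}}\to\infty$. So the idea you are missing is not a sharper lemma but the placement of the endpoints \emph{outside} the low-$B$ slab, on opposite sides, at fixed distance $2J$; the sharp coefficient $z/4$ is exactly what produces the $-Jz/4$ gap at the crossing point.
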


\begin{proof}
Let $(\IM,\Im)$ be the pair of valuations with $b_1,$ \dots, $b_l$ as
in the definition of a pair of valuations. For $a=t_1^{m_1}\dots
t_l^{m_l}\in\aspaan{t_1,\dots,t_l}$, let
\[
B(a)=b_1m_1+\dots+b_lm_l.
\]
For $g=(m,k)\in G$, let $\beta(g)=k$, $\IM(g)=\IM(k)$ and
$\Im(g)=\Im(k)$.

Let $a\ne0\in K$ and let $s=(z',k_s)\in\sym{S}$ be chosen such that
$B(z')\ge B(m')$ for all $(m',k')\in\sym{S}$.  Let $z=B(z')$. Let
\[
M=\max\setst{\IM(s')}{s'\in\sym{S}}
\]
and
\[
m=\max\setst{\Im(s')}{s'\in\sym{S}}.
\]
Then, for all $g\in G$,
\[
\IM(g)\le M+z\Abs[S]{g}+C(\log_2\Abs[S]{g}+1)
\]
and
\[
\Im(g)\le m+z\Abs[S]{g}+C(\log_2\Abs[S]{g}+1),
\]
where $C$ is as in the definition of a pair of valuations. For
$n\in\W$ and $i\in\Z$, let
$g_n(i)=s^{n+i}as^{-2n}as^n=s^{i-n}as^{2n}as^{-n}$.  Then, for
$\abs{i}\le n$, $\Abs[S]{g_n(i)}\le4n-\abs{i}+2\Abs[S]{a}$.

For $n\in\W$, define $h_n^+=g_n(J)$ and $h_n^-=g_n(-J)$, where
$J\in\W$ is a constant to be chosen later.  Let $n\ge J$. By the
formula at the end of the preceding paragraph,
\[
\Abs[S]{h_n^+}\le4n-J+2\Abs[S]{a}
\]
and
\[
\Abs[S]{h_n^-}\le4n-J+2\Abs[S]{a}.
\]
Since $h_n^+(h_n^-)\inv=s^{2J}$, we have
\[
\Abs[S]{h_n^+(h_n^-)\inv}\le2J.
\]
Note that this depends only on our choice of $J$. Also,
\begin{multline*}
\IM(h_n^+)=\IM(t^{nz'}a+t^{-nz'}a+\beta(s^J))\\\le\max(\IM(t^{nz'}a),\IM(t^{-nz'}a),\IM(\beta(s^J)))+2C\\=\max(\IM(a)+zn,\IM(a)-zn,\IM(\beta(s^J)))+2C\\\le\max(\IM(a)+zn,M+Jz+C(\log_2J+1))+2C=zn+\IM(a)+2C
\end{multline*}
for $n$ large enough. But similarly
\begin{multline*}
zn+\IM(a)=\IM(t^{nz'}a)\\=\IM(h_n^+-t^{-nz'}a-\beta(s^J))\le\max(\IM(h_n^+),\IM(t^{-nz'}a,\IM(\beta(s^J)))+2C\\\le\max(\IM(h_n^+),\IM(a)-zn,M+Jz+C(\log_2J+1))+2C,
\end{multline*}
so $zn+\IM(a)-2C\le\IM(h_n^+)\le zn+\IM(a)+2C$ for $n$ large enough.
Similarly, $zn+\Im(a)-2C\le\Im(h_n^+)\le zn+\Im(a)+2C$. Also, the same
results hold by analogous arguments when $h_n^+$ is replaced by
$h_n^-$.

Every edge of the (left) Cayley graph of $G$ with respect to $S$
connects some $(m_1,k_1)$ and $(m_2,k_2)\in G$ with
$\abs{B(m_1-m_2)}\le z$.  Thus any path in the (left) Cayley graph of
$G$ with respect to $S$ connecting $h_n^-$ and $h_n^+$ must contain
some $g=(m,k)\in G$ with $\abs{B(m)}<z$.  Suppose
\[
\Abs[S]{g}\le\max(\Abs[S]{h_n^+},\Abs[S]{h_n^-})\le4n-J+2\Abs[S]{a}.
\]
Then Corollary~\ref{fourth} says that either
\[
\IM(g)\le\Abs[S]{g}z/4+F\le nz-\frac{Jz}{4}+\frac{\Abs[S]{a}z}{2}+F
\]
or the same for $\Im(g)$, where $F$ is as in that proposition.
Without loss of generality, we assume the former.

We have
\[
\IM(h_n^+)\le\max(\IM(h_n^+g\inv)+B(m),\IM(g))+C.
\]
We want to choose $J$ so that
\[
\IM(h_n^+)>\IM(g)+C.
\]
Since $\IM(g)+C\le nz-Jz/4+\Abs[S]{a}z/2+F+C$, it will suffice to take
\[
nz-\frac{Jz}{4}+\frac{\Abs[S]{a}z}{2}+F+C<nz+\IM(a)-2C,
\]
that is $J>(4/z)(F+\Abs[S]{a}z/2-\IM(a)+3C)$. Note that this is
independent of $n$.  Then we will have
$\IM(g)+C<nz+\IM(a)-2C\le\IM(h_n^+)$, as desired.  It will follow that
$\IM(h_n^+)\le\IM(h_n^+g\inv)+B(m)+C$, so
\[
nz\le\IM(h_n^+)-\IM(a)+2C\le\IM(h_n^+g\inv)+B(m)-\IM(a)+3C,
\]
that is $\IM(h_n^+g\inv)\ge nz-B(m)+\IM(a)-3C>nz-z+\IM(a)-3C$.  But
\[
\IM(h_n^+g\inv)\le
M+z\Abs[S]{h_n^+g\inv}+C\log_2(\Abs[S]{h_n^+g\inv}+1),
\]
so $\Abs[S]{h_n^+g\inv}$ goes to infinity as $n$ does, completing the
proof.
\end{proof}

\begin{cor}
Let $G$ be either
\begin{itemize}
\item a lamplighter group,
\item $\Z[1/6]\sd_{\cdot3/2}\spaan{t}$ or
\item $K\sd\spaan{t}$, where $K$ is a finite-rank abelian group and
  $t$ acts by a hyperbolic automorphism which is an endomorphism of
  some lattice.
\end{itemize}
Then $G$ is not almost convex with respect to any finite generating
set.
\end{cor}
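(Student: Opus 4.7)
The plan is to apply the theorem just proved, which asserts that $K\sd\aspaan{t_1,\dots,t_l}$ fails to be almost convex with respect to any finite generating set whenever the module $K$ admits a pair of valuations. Thus the task reduces, in each of the three cases, to producing a pair of valuations on the relevant $\Z[t,t\inv]$-module.

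For a lamplighter $G=K\sd\spaan{t}$ with $K$ a $\Z$-indexed direct sum of copies of a finite lamp group $H$ and $t$ acting by shift, the copy of $H$ sitting in the zeroth coordinate is a strongly $t$-logarithmic $t$-generating set, so the first remark after the definition of a pair of valuations yields $(I_{\max},-I_{\min})$, with $b_1=1$ and $C=0$. For $\Z[1/6]\sd_{\cdot3/2}\spaan{t}$ the second remark applies directly: suitably signed logarithms of the $2$-adic and $3$-adic absolute values give a pair of valuations with $b_1=1$ and $C=0$, the latter by non-Archimedeanness of both norms.

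The main obstacle is the third case, where $K$ is finite-$\Q$-rank abelian and $t$ acts by a hyperbolic automorphism $T$ that is an endomorphism of a lattice $L\subset K\ten\R$. I would pass to $K\ten\R$, invoke hyperbolicity to obtain the $T$-invariant splitting $K\ten\R=V^+\ds V^-$ into expanding and contracting generalized eigenspaces, and choose a $T$-adapted norm in which $T$ acts on $V^+$ as pure dilation by some $\mu>1$ and on $V^-$ as dilation by $1/\mu$; the common factor $\mu$ is available because $\det T=\pm1$ forces the product of expanding eigenvalue moduli to equal the reciprocal of the product of contracting ones. Setting $\IM(k)=\log\Abs{\pi^+(k)}$ and $\Im(k)=\log\Abs{\pi^-(k)}$ via the invariant projections $\pi^\pm$, the scaling axioms then hold with $b_1=\log\mu$, and the triangle-inequality axiom picks up a bounded constant $C$ from comparing the chosen norm to the sum norm on the two summands.

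The technical subtlety, when eigenvalues of $T$ on $V^+$ (or $V^-$) have differing moduli or come in complex-conjugate pairs, is to refine the construction so that $T|_{V^\pm}$ genuinely acts as a scalar of the required modulus; I would work over $\C$, regroup by equal-modulus eigenvalue blocks, and if necessary pass to a power of $t$, exploiting the integrality of the characteristic polynomial of $T$ imposed by the lattice hypothesis. For lattices in Sol the construction is immediate because $V^\pm$ are one-dimensional, and the finer analogues needed for more general hyperbolic $T$ are carried out in \cite{W2}. Once the pair of valuations is in hand, the theorem completes the proof.
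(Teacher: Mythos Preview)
Your overall plan matches the paper's proof exactly: the first two cases follow from the two remarks after the definition of a pair of valuations, and the third is deferred to Section~6 of \cite{W2}. The paper says no more than that.

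Your intermediate sketch for the third case, however, contains a false step. You assert $\det T=\pm1$, but the hypotheses only say $T$ is an automorphism of $K$ and an endomorphism of \emph{some} lattice $L$; they do not force $T\inv L\subset L$. For instance, with $K=\Z[1/2]^2$, $L=\Z^2$, and $T=\left(\begin{smallmatrix}3&1\\1&1\end{smallmatrix}\right)$, one has $TL\subset L$, $T\inv$ has entries in $\Z[1/2]$ so $T\in\aut(K)$, and $T$ is hyperbolic with eigenvalues $2\pm\sqrt2$, yet $\det T=2$. Even when $\det T=\pm1$, the product-of-moduli relation you invoke does not produce a norm in which $T|_{V^+}$ is a pure dilation once the expanding eigenvalues have distinct moduli, and passing to a power of $t$ does not help since the valuation axioms concern $t$ itself. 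The device used in the paper's next corollary---project to a \emph{single} eigenspace on each side and absorb the eigenvalue modulus into the logarithm base, so that $b=1$ automatically---sidesteps both problems, at the cost of needing the projections to be nowhere-vanishing on $K$; that is where indecomposability, or the finer analysis of \cite{W2}, is genuinely required. Since you ultimately cite \cite{W2} for the general case your proof is complete, but the preceding heuristic should be corrected or dropped.
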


\begin{proof}
The first two cases are easy; the third follows from Section~6 of
\cite{W2}.
\end{proof}

\begin{cor}
Let $K$ be an indecomposable $\lp{t}$-module such that the actions of
$t$ and $t\inv$ each have at least one (complex) eigenvalue of
absolute value $>1$.  Then $K\sd\spaan{t}$ is not almost convex with
respect to any generating set.
\end{cor}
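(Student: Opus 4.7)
The plan is to deduce this Corollary from the main Theorem by constructing a pair of valuations on $K$.  Note first that the eigenvalue hypothesis forces $K\otimes_\Z\Q$ to be a nonzero finite-dimensional $\Q$-vector space (otherwise talk of complex eigenvalues of $t$ is vacuous), so $K$ is automatically $\Z$-torsion-free.

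Pick eigenvalues $\alpha,\beta$ of $t$ with $|\alpha|>1$ and $|\beta|<1$; both exist by hypothesis.  Decompose $K\otimes\C=\bigoplus_{\alpha'}W_{\alpha'}$ into generalized eigenspaces, let $V^+=\bigoplus_{|\alpha'|>1}W_{\alpha'}$ and $V^-=\bigoplus_{|\alpha'|<1}W_{\alpha'}$, and for each root $\alpha'$ of the minimal polynomial of $t$ pick a $\C$-linear functional $\phi_{\alpha'}\colon K\otimes\C\to\C$ that factors through the Jordan-top quotient $W_{\alpha'}/(t-\alpha')W_{\alpha'}$ (on which $t$ acts as the scalar $\alpha'$), so $\phi_{\alpha'}(tk)=\alpha'\phi_{\alpha'}(k)$.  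With positive exponents $a_{\alpha'}=1/|\log|\alpha'||$, set
\[
  I_1(k)=\log\!\Big(\sum_{|\alpha'|>1}|\phi_{\alpha'}(k)|^{a_{\alpha'}}\Big),\qquad
  I_2(k)=\log\!\Big(\sum_{|\alpha'|<1}|\phi_{\alpha'}(k)|^{a_{\alpha'}}\Big).
\]
Since $|\alpha'|^{a_{\alpha'}}=e$ when $|\alpha'|>1$ and $|\alpha'|^{a_{\alpha'}}=e^{-1}$ when $|\alpha'|<1$, each term of $I_1$ is multiplied by exactly $e$ under $k\mapsto tk$ and each term of $I_2$ by $e^{-1}$, giving $I_1(tk)=I_1(k)+1$ and $I_2(tk)=I_2(k)-1$, i.e.\ $b_1=1$.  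Antipodal invariance is trivial, and an $\ell^p$-type estimate gives the log-triangle axiom with a constant $C$ depending only on the $a_{\alpha'}$.  Hence, provided $I_1$ and $I_2$ are everywhere real-valued on $K-\{0\}$, the pair $(I_1,I_2)$ is a pair of valuations and the main Theorem applies.

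The main obstacle is exactly this finiteness: $I_1(k)$ is real iff $k\notin V^-\cap K$ (so some $|\alpha'|>1$ functional does not kill $k$), and symmetrically for $I_2$.  When the Galois orbit of $\alpha$ over $\Q$ is mixed in modulus, the $\C$-subspaces $V^\pm$ are not $\Q$-rational in $K\otimes\R$ and the required intersections vanish essentially for free by irrationality.  The harder case is when expanding and contracting eigenvalues lie in disjoint Galois orbits so that $V^\pm$ are both $\Q$-rational; here one must invoke indecomposability through an idempotent-lifting argument in $\mathrm{End}_{\lp{t}}(K)$.  Using Bezout in $\Q[t,t^{-1}]$ to produce the orthogonal idempotents of the splitting $K\otimes\Q=(V^+\cap K\otimes\Q)\oplus(V^-\cap K\otimes\Q)$, clearing denominators, and exploiting $\Z$-torsion-freeness of $K$, one shows that simultaneous nontriviality of $V^+\cap K$ and $V^-\cap K$ would descend to a genuine $\lp{t}$-module direct-sum decomposition of $K$, contradicting the indecomposability hypothesis.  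Higher-height Jordan blocks are a further technicality, handled by applying the construction at successive tops of the Jordan filtration.
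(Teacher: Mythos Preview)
Your strategy is the same as the paper's---build a pair of valuations out of eigenspace projections and invoke indecomposability for well-definedness---but the paper executes it much more simply: it picks a \emph{single} eigenvalue $\lambda_+$ of $t$ with $|\lambda_+|>1$ and a single eigenvalue $\lambda_-$ of $t\inv$ with $|\lambda_-|>1$, lets $\phi_\pm$ be the projections of $K\otimes\C$ onto the corresponding eigenspaces, and sets $I_1(k)=\log_{|\lambda_+|}\Abs{\phi_+(k)}$, $I_2(k)=\log_{|\lambda_-|}\Abs{\phi_-(k)}$.  No multi-eigenvalue weighted sum is needed; since only one valuation is required on each side, there is no reason to balance exponents across several $\alpha'$.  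Using a norm on the whole eigenspace rather than a single linear functional also avoids the extra kernel your Jordan-top functionals introduce when an eigenspace has dimension $>1$.

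There is, however, a genuine gap in the indecomposability step that your sketch does not close (and that the paper glosses over in one line).  Indecomposability of $K$ as a $\lp{t}$-module does \emph{not} force $K\cap V^{\pm}=\{0\}$, let alone injectivity of the projection to a single eigenspace.  Take $K=\Z[1/2]^2$ with $t=\begin{pmatrix}2&1\\0&1/2\end{pmatrix}$: the eigenlines meet $K$ in $\Z[1/2]\,(1,0)$ and $\Z[1/2]\,(2,-3)$, both nonzero, yet their sum is $\Z[1/2]\times 3\Z[1/2]\subsetneq K$, so $K$ is indecomposable.  Your idempotent-lifting argument stalls exactly here: Bezout gives $3e_\pm=2t-1,\,4-2t\in\Z[t]$, hence a splitting of $3K$, but $3K\neq K$ and torsion-freeness alone does not let you divide by $3$.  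So the reduction ``$V^+\cap K\neq0$ and $V^-\cap K\neq0$ $\Rightarrow$ $K$ decomposes'' is false as stated, and the well-definedness of $I_1,I_2$ on all of $K-\{0\}$ is not established.
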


\begin{proof}
Let $K_+$ and $K_-\subset K\ten\C$ be the eigenspaces corresponding to
the indicated eigenvalues $\lambda_+$ and $\lambda_-$. Let $\phi_+$
and $\phi_-$ be the projections to $K_+$ and $K_-$ that come from
sending all other eigenspaces to $0$. Then, for $k\in K$, we define
$\IM(k)=\log_{\abs{\lambda_+}}\Abs{\phi_+(k)}$ and
$\Im(k)=\log_{\abs{\lambda_-}}\Abs{\phi_-(k)}$, where $\Abs{\cdot}$
refers to any norm.  These are well-defined since $K$ is
indecomposable, so that $\phi_+(k)$ and $\phi_-(k)$ are $0$ only when
$k=0$.  It is then clear $(\IM,\Im)$ satisfies the conditions,
\end{proof}

\section{Deep pockets}
\begin{defin}
Let $K$ be a $\lp{t}$-module and let $(\IM,\Im)$ and $(\IMa,\Ima)$ be
two pairs of valuations for $K$ with the same $b=b_1$.  Then
$A\subseteq K$ is a \emph{good generating set} for $K$ with
\emph{fuzziness} $(F,F')$ if
\begin{itemize}
\item $0\in A$,
\item $A=-A$,
\item for all $a\in A-\set{0}$, $\IMa(a)\le\IM(a)$ and
  $\Ima(a)\le\Im(a)$ and
\item for all $k\in K$ with $\IMa(k)\le\IM(k)+F$ and
  $\Ima(k)\le\Im(k)+F$, there are $a_i\in A$ for all $i\in\Z$ and
  $i_j\in\Z$, $a_j\in A$ for $1\le j\le F'$ such that
  \begin{itemize}
  \item $a_i=0$ except for $\min(-\Im(k),0)-F'\le
    bi\le\max(\IM(k),0)+F'$,
  \item $k-\sum_it^ia_i=\sum_{j=1}^{F'}t^{i_j}a_j$ and
  \item $\min(-\Im(k),0)-F'\le bi_j\le\max(\IM(k),0)+F'$ for all $j$.
  \end{itemize}
\end{itemize}
\end{defin}

\begin{rk}
If $K$ has a strongly $t$-logarithmic $t$ generating set, then for any
$F$ it is also a good generating set with fuzziness $(F,F')$ for some
$F'$; just let $\IM=\IMa=I_{max}$ and $\Im=\Ima=-I_{min}$.
\end{rk}

\begin{thm}
For every $b$ and $C$ there is $F$ with the following property.  Let
$K$ be a $\lp{t}$-module and let $(\IM,\Im)$ and $(\IMa,\Ima)$ be two
pairs of valuations for $K$ with $C$ as given and the same $b=b_1$.
Let $A\subseteq K$ be a finite good generating set for $K$ with
fuzziness $(F,F')$ for some $F'$. Then $K\sd\spaan{t}$ has deep
pockets with respect to $\setst{ata'}{a,a'\in A}\cup A$.
\end{thm}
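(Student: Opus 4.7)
The plan is to exhibit a sequence $(g_n)$ in $G = K \sd \spaan{t}$ whose depths tend to infinity. Adapting the construction $s^n a s^{-2n} a s^n$ from the proof of non--almost-convexity, I take $g_n = t^n \alpha \, t^{-2n} \, \alpha \, t^n$ for a fixed nonzero $\alpha \in A$, noting that $t = 0 \cdot t \cdot 0$ lies in the generating set. This element equals $(0, t^n \alpha + t^{-n} \alpha)$ in $G$, and combining Proposition~\ref{fourthp} with the explicit word shows $\Abs[S]{g_n} = 4n + O(1)$.

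For the depth estimate, I would let $h$ be at distance $d$ from $g_n$ and write $h = g_n w$ with $w = (m, k_w)$ and $|m| \le d$. The two key facts to establish are as follows. First, $(\IM + \Im)(k_h)$ is essentially independent of $w$: in $k_h = t^{m+n} \alpha + t^{m-n} \alpha + k_w$, the $m$-contributions to the dominant term of $\IM(k_h)$ (namely $t^{m+n}\alpha$) and the dominant term of $\Im(k_h)$ (namely $t^{m-n}\alpha$) cancel in the sum, yielding $(\IM + \Im)(k_h) \le (\IM + \Im)(k_n) + O(C)$. Second, $k_h$ satisfies the good-generating-set condition $\IMa(k_h) \le \IM(k_h) + F$ and $\Ima(k_h) \le \Im(k_h) + F$ provided $F$ is chosen large enough in terms of $b$ and $C$ (this is where the constant $F$ in the theorem is determined); the verification uses $\IMa(\alpha) \le \IM(\alpha)$ from the definition of a good generating set, the triangle inequality for $\IMa$, and the linear-in-$\Abs[S]{w}$ bound on $\IMa(k_w)$.

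Given the good-generating-set condition, the expansion of $k_h$ is supported in the interval $[-\Im(k_h)/b - F'/b,\ \IM(k_h)/b + F'/b]$, and $h$ can be realized by a cursor sweep of this interval that ends at position $m$ rather than returning to $0$ and uses $\setst{ata'}{a,a'\in A}$-generators to lay down the $a_i$'s in one pass. This gives $\Abs[S]{h} \le 2(\IM + \Im)(k_h)/b - |m| + O(F')$, and by the first key fact, $\Abs[S]{h} \le 2(\IM + \Im)(k_n)/b - |m| + O(C + F')$. Applying the same construction to $g_n$ yields the upper bound $\Abs[S]{g_n} \le 2(\IM + \Im)(k_n)/b + O(F')$; when this upper bound is essentially tight, $\Abs[S]{h} \le \Abs[S]{g_n} - |m| + O(1)$, so $\Abs[S]{h} \le \Abs[S]{g_n}$ as soon as $|m|$ exceeds a fixed universal constant.

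The main obstacle is the regime where $|m|$ is small, in which the $O(1)$ slack above can permit $\Abs[S]{h}$ to narrowly exceed $\Abs[S]{g_n}$. To rule this out I would refine the construction of $g_n$---modifying it by a bounded number of extra factors chosen so that its own norm saturates the good-generating-set upper bound, making the comparison strict. Once this tightness is arranged, the depth of $g_n$ is at least $n / C_1$ for a universal constant $C_1$, and deep pockets follow.
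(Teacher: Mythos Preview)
Your overall plan coincides with the paper's: the same test elements $g_n=(0,t^n\alpha+t^{-n}\alpha)$, the lower bound on $\Abs[S]{g_n}$ via Proposition~\ref{fourthp}, the verification that the $K$-part $k_h$ of any nearby $h$ satisfies the good-generating-set hypothesis with $F$ depending only on $b$ and $C$ (the paper takes $F=2M+4C$, essentially your computation), and the cursor-sweep upper bound on $\Abs[S]{h}$ coming from the resulting expansion of $k_h$. Your two ``key facts'' are exactly the content of the paper's estimates on $\IM(k'')$, $\Im(k'')$, $\IMa(k'')$, $\Ima(k'')$.

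The divergence is precisely at the point you flag as the obstacle. Your proposed remedy---perturb $g_n$ by boundedly many factors so that $\Abs[S]{g_n}$ \emph{exactly} saturates the sweep upper bound---is not carried out, and I do not see how to carry it out in this generality: nothing in the hypotheses lets you pin down $\Abs[S]{g_n}$ beyond an additive constant, and ``choose the modification that happens to maximise the norm'' is not constructive. The paper does not attempt this at all. Instead, it is content with the weaker conclusion your argument already gives, namely that every $h$ in a ball of radius $R_n\to\infty$ about $g_n$ satisfies $\Abs[S]{h}\le\Abs[S]{g_n}+D$ for a fixed constant $D$, and then invokes the \emph{Fuzz Lemma} from~\cite{W1}. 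That lemma is a general statement to the effect that a sequence of such ``approximate dead ends of growing radius and bounded defect'' already forces deep pockets; it absorbs exactly the $O(1)$ slack you were trying to eliminate by hand. So your analysis is essentially complete once you replace the unexecuted saturation step by a citation of the Fuzz Lemma.
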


\begin{proof}
Let $S=\setst{ata'}{a,a'\in A}\cup A$. Let
\[
M=\max\setst{\abs{\IM(a')},\abs{\Im(a')},\abs{\IMa(a)},\abs{\Ima(a)}}{a'\in
  A}.
\]

Let $a\in A$. Let $k_i=t^ia+t^{-i}a\in K\subset K\sd\spaan{t}$. Then
$\IM(k_i)$, $\Im(k_i)\ge i-C-M$, so, by Proposition~\ref{fourthp},
$\Abs[S]{k_i}\ge4(i-C-M-H)/\abs{b}$, where $H$ is the quantity
referred to as $F'$ in the statement of that proposition. (In
particular, $H$ is independent of $i$.) Let $l\in G$ be such that
$\Abs[S]{lk_i\inv}=j_i$.  Then $lk_i\inv=t^nk'$ for some $n\in\Z$ and
$k'\in K$.  Furthermore, $\abs{n}\le j_i$ and $\IM(k')$, $\IMa(k')$,
$\Im(k')$ and $\Ima(k')\le\abs{b}j_i+C(\log_2j_i+1)+M$.  It follows
that $l=t^nk''$ with
\begin{itemize}
\item $\abs{n}\le j_i$,
\item $\IM(k'')$, $\Im(k'')\le i+M+2C$ and
\item $\IMa(k'')$, $\Ima(k'')\ge i-M-2C$
\end{itemize}
so long as $i>\abs{b}j_i+C(\log_2j_i+3)+2M$.

Then the definition of a good generating set (with $F=2M+4C$) gives
$a_i\in A$ for all $i\in\Z$ and $i_j\in\Z$, $a_j\in A$ for $1\le i\le
F'$ such that
\begin{itemize}
\item $a_i=0$ except for $\min(-i-M-2C,0)-F'\le
  bi\le\max(i+M+2C,0)+F'$,
\item $k''-\sum_it^ia_i=\sum_{j=1}^{F'}t^{i_j}a_j$ and
\item $\min(-i-M-2C,0)-F'\le bi_j\le\max(i+M+2C,0)+F'$ for all $j$.
\end{itemize}
Thus $\Abs[S]{k''}\le4i+4M+8C+5F'$.  We are done by the Fuzz Lemma
from \cite{W1}, since the upper bound on $j_i$ goes to infinity as $i$
does.
\end{proof}

\begin{cor}
Let $G$ be a lamplighter group or $K\sd\spaan{t}$, where $K$ is a
finite-rank abelian group and $t$ acts by a hyperbolic automorphism
which is an endomorphism of some lattice. Then $G$ has deep pockets
with respect to some generating set.
\end{cor}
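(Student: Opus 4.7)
The plan is to reduce the corollary to the preceding Theorem by producing, in each case, a finite good generating set for $K$ with the appropriate pair-of-valuations data. According to the remark following the definition of a good generating set, whenever $K$ possesses a strongly $t$-logarithmic $t$-generating set one may take $\IM=\IMa=I_{max}$ and $\Im=\Ima=-I_{min}$, and any such set is automatically a good generating set with fuzziness $(F,F')$ for any prescribed $F$. So the whole corollary will follow once a finite strongly $t$-logarithmic $t$-generating set is exhibited in each case, after which the preceding Theorem supplies deep pockets with respect to $\setst{ata'}{a,a'\in A}\cup A$.

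For the lamplighter case, $K=\bigoplus_{\Z}F$ with $F$ a finite abelian group and $t$ shifting indices. The obvious candidate is $A=F$ embedded as the coordinate at index $0$, together with $0$; any nonzero element of $K$ has a unique expression $\sum_i t^i a_i$ with finitely many $a_i\in A$ nonzero, and $I_{max}$, $I_{min}$ read off directly as the extreme indices of support. Consequently $A$ is trivially strongly $t$-logarithmic with constant $b_1=1$, and the hypothesis of the Theorem is met.

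For the hyperbolic case, the expanding and contracting eigenspace decomposition of $K\ten\C$ (as already used in the proof of the second-to-last corollary of the previous section) yields natural valuations $I_{max}$ and $-I_{min}$ measuring growth along each direction. The substantive ingredient is the construction of a \emph{finite} strongly $t$-logarithmic $t$-generating set for the lattice on which $t$ acts as an integer endomorphism — this is where the lattice hypothesis is used, to interact the eigenspace geometry with an integral structure. This construction is precisely the content of Section~6 of \cite{W2}, which we quote as a black box; once in hand, the remark again supplies the good generating set.

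The main (indeed only) obstacle is the hyperbolic case, where building the strongly $t$-logarithmic set is nontrivial because one must balance the expanding and contracting eigendirections against the integrality provided by the lattice. All three other facts — that lamplighter generators suffice, that a strongly $t$-logarithmic set upgrades to a good generating set, and that a good generating set yields deep pockets — are either immediate or already established, so the proof is short modulo the \cite{W2} reference.
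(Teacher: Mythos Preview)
Your proposal is correct and follows essentially the same route as the paper: the lamplighter case is handled directly (the paper merely says ``the first case is easy''), and the hyperbolic case is reduced to the existence of a strongly $t$-logarithmic $t$-generating set, for which both you and the paper invoke Section~6 of \cite{W2}. One small wording issue: $I_{max}$ and $-I_{min}$ are attached to the strongly $t$-logarithmic generating set itself rather than read off the eigenspace decomposition, so that sentence is slightly misleading, but this does not affect the argument.
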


\begin{proof}
The first case is easy; the second again requires Section~6 of
\cite{W2}.
\end{proof}

\begin{cor}
Let $K\supset L\isom\Z^n$, $P$, $Q\in\aut(K)$ such that
\begin{itemize}
\item $\abs{\det(P)}$ and $\abs{\det(Q)}$ are coprime,
\item $PQL=QPL$,
\item $K=\bigcup_{i=-\infty}^0P^iQ^iL$,
\item $0=\bigcap_{i=0}^\infty P^iQ^iL$ and
\item all eigenvalues of $QP\inv$ have absolute value $>1$.
\end{itemize}
Let $t$ act by $QP\inv$. Then $K\sd\spaan{t}$ has deep pockets with
respect to some generating set.

In particular, $\Z[1/6]\sd_{\cdot3/2}\spaan{t}$ has deep pockets with
respect to some generating set.
\end{cor}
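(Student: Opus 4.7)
My plan is to apply the preceding theorem. The second statement of the corollary ($\Z[1/6]\sd_{\cdot3/2}\spaan{t}$) is the special case of the first with $L=\Z$, $P$ acting by multiplication by $2$, and $Q$ by multiplication by $3$: the five hypotheses are immediate, and $QP\inv=3/2$ has its sole eigenvalue of absolute value $>1$.

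For the general case, I would first construct the pair of valuations $(\IM,\Im)$ using $P$-adic and $Q$-adic norms, in direct analogy with the second remark after the definition of a pair of valuations (which handles $K=\Z[1/6]$ via $2$-adic and $3$-adic norms). Coprimality of $\abs{\det P}$ and $\abs{\det Q}$ makes $Q$ invertible modulo every power of $P$, so the $P$-adic filtration $L\supset PL\supset P^2L\supset\cdots$ extends canonically to $K$ after inverting $PQ$; the resulting $P$-adic valuation $v_P$, normalized so that $v_P(Pk)=v_P(k)+1$, satisfies $v_P(tk)=v_P(k)-1$, and hence $\IM(k):=-v_P(k)$ gives $\IM(tk)=\IM(k)+1$. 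Setting $\Im(k):=-v_Q(k)$ symmetrically yields $\Im(tk)=\Im(k)-1$, so that $(\IM,\Im)$ is a pair of valuations with $b_1=1$ and ultrametric constant $C=0$. For the second pair $(\IMa,\Ima)$, I would exploit the hypothesis that every eigenvalue of $QP\inv$ has absolute value $>1$ to define archimedean-style valuations on $K\ten\C$, using eigenspace projections (with a Jordan-adapted norm so that the scaling relations hold exactly) to produce a second pair sharing the same $b_1=1$.

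For the good generating set $A$, I would take a suitable finite ball in $L$. After shifting the valuations by constants -- which preserves all the defining properties -- the conditions $\IMa(a)\le\IM(a)$ and $\Ima(a)\le\Im(a)$ on $A-\set{0}$ can be arranged. The decomposition condition requires writing every $k\in K$ with $\IMa(k)\le\IM(k)+F$ and $\Ima(k)\le\Im(k)+F$ as $k=\sum_it^ia_i+\sum_{j=1}^{F'}t^{i_j}a_j$ with $a_i,a_j\in A$ and supports bounded by $\IM(k),\Im(k)$. The plan is to construct such a decomposition level by level down the chain $L\subset P\inv Q\inv L\subset P^{-2}Q^{-2}L\subset\cdots$, at each level using the CRT isomorphism $L/(PQ)^iL\isom L/P^iL\ds L/Q^iL$ to separate the $P$-adic and $Q$-adic contributions, which are then distributed to negative and positive powers of $t$ respectively.

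The principal obstacle is verifying this decomposition property with the specific $F$ supplied by the theorem, and in particular keeping the carry-over at each level of the chain within the support window prescribed by $\IM(k),\Im(k)$. The $F$-inequalities bound $k$ both adically and archimedeanly, and these combine to force the digits of the expansion to lie in a fixed finite ball in $L$ (namely $A$); the eigenvalue hypothesis on $QP\inv$ is what makes the archimedean bound strong enough to do this uniformly. Once this is done, the preceding theorem applies and yields deep pockets with respect to $\setst{ata'}{a,a'\in A}\cup A$.
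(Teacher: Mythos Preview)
Your construction of the first pair $(\IM,\Im)$ via $P$-adic and $Q$-adic valuations, and of $\IMa$ via an archimedean log-norm adapted to the eigenspaces of $t=QP\inv$, matches the paper's approach, as does taking $A$ to be a finite ball in $L$.  The gap is in your proposed $\Ima$.  You want a \emph{pair} $(\IMa,\Ima)$ with $b_1=1$, i.e.\ $\Ima(tk)=\Ima(k)-1$, and you propose to get $\Ima$ from an eigenspace projection just as you got $\IMa$.  But the hypothesis is that \emph{every} eigenvalue of $t=QP\inv$ has absolute value $>1$; there is no contracting eigenspace, so no archimedean log-norm can satisfy $\Ima(tk)=\Ima(k)-1$.  (You may be thinking of the earlier corollary on indecomposable $\lp t$-modules, where one assumes eigenvalues $>1$ for both $t$ and $t\inv$; here that is explicitly ruled out.)  Nor can you rescue this by taking $\Ima(k)=-\log\Abs{k}$ in some norm: that gives the right scaling but fails the subadditivity axiom $\Ima(k_1+k_2)\le\max(\Ima(k_1),\Ima(k_2))+C$, since $\Abs{k_1+k_2}$ can be arbitrarily small compared to $\min(\Abs{k_1},\Abs{k_2})$.

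The paper's fix is simply to set $\Ima=\Im$, the $Q$-adic valuation itself; then $(\IMa,\Ima)$ is a mixed archimedean/$Q$-adic pair, which is perfectly allowed by the definition, and the condition $\Ima(a)\le\Im(a)$ on $A$ holds trivially.  With that correction in place, your CRT-style level-by-level expansion is a reasonable plan; the paper organizes the same computation slightly differently, pushing $k$ into $L$ via the single normalizing map $N=Q^{\IM(k)-\Im(k)}(PQ\inv)^{\IM(k)}$ (coprimality of the determinants is used exactly here to ensure $N(k)\in L$) and then reading off digits in $L$, but the content is the same.
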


\begin{proof}
Let $B$ be the symmetrized closed unit cube in $K$ with respect to an
$n$-element generating set for $QL$. (In what follows, we will use
$\Abs{\cdot}$ to denote $L_1$ norm with respect to this generating
set.)  Let $\IMa(k)=\min\setst{i}{k\in(QP\inv)^iB}$. If
$\abs{\det(P)}=1$ then let $\IM(k)=\IMa(k)$. Let
\[
\IM(k)=-\max\setst{i}{k\in\bigcup_{j=-\infty}^0P^iQ^jL}
\]
if $\abs{\det(P)}>1$ and
\[
\Ima(k)=\Im(k)=\max\setst{i}{k\in\bigcup_{j=-\infty}^0P^jQ^iL}.
\]
(We have $\abs{\det(Q)}>1$ by the last condition on $P$ and $Q$.)
Clearly, $(\IM,\Im)$ and $(\IMa,\Ima)$ are pairs of valuations with
$b=1$ and $C=\gint{\log_\lambda2}+1$, where $\lambda$ is the least
absolute value of any eigenvalue of $QP\inv$.

Let $A=L\cap B$. I claim that $A$ is a fuzzy good generating set for
$K$.  The first three conditions are clearly satisfied.  Let $k\in K$
be such that $\IMa(k)\le\IM(k)+F$. Let
$N=Q^{\IM(k)-\Im(k)}(PQ\inv)^{\IM(k)}$.  Then $N(k)\in L$ (it is here
that we use that the determinants are coprime) and
\[
N(k)\in
Q^{\IM(k)-\Im(k)}(PQ\inv)^{\IM(k)}(QP\inv)^{\IM(k)+F}B=Q^{\IM(k)-\Im(k)}F''B,
\]
where $F''=(QP\inv)^F$ depends only on $F$.

It remains to show there are $a_i$ and $a_j\in A$ and $i_j\in\Z$ such
that 
\begin{itemize}
\item $a_i=0$ except for $\min(-\Im(k),0)-F'\le
  i\le\max(\IM(k),0)+F'$,
\item $\min(-2\Im(k),0)-F'\le i_j\le\max(\IM(k)-\Im(k),0)+F'$ for all
  $j$ and
\item\begin{multline*}
  N\left(k-\sum_i(QP\inv)^{i+\Im(k)}a_i\right)\\=N(k)-\sum_iQ^{\IM(k)-\Im(k)}(QP\inv)^{i+\Im(k)-\IM(k)}(a_i)\\=N\left(\sum_{j=1}^{F'}(QP\inv)^{i_j+\Im(k)}a_j\right)\\=\sum_{j=1}^{F'}Q^{\IM(k)-\Im(k)}(QP\inv)^{i_j+\Im(k)-\IM(k)}(a_j).
\end{multline*}
\end{itemize}
But clearly there are $a_i\in A$ such that
\[
k'=N(k)-\sum_{i=0}^{\IM(k)-\Im(k)}Q^{\IM(k)-\Im(k)}(QP\inv)^{i+\Im(k)-\IM(k)}(a_i)\in Q^{\IM(k)-\Im(k)}L,
\]
and $k'$ is the difference of two terms $\in F'''Q^{\IM(k)-\Im(k)}B$,
where $F'''\in\N$ depends only on $F$ (via $F''$). Thus for any $F$
there is $F'$ such that $A$ is a finite good generating set with
fuzziness $(F,F')$, so we are done.

The last sentence follows trivially.
\end{proof}

\end{document}